\newtheorem{prop}{Proposition}[subsection]
\newtheorem{lem}[prop]{Lemma}
\newtheorem{theo}{Theorem}
\newtheorem{theo2}{Theorem}
\newtheorem{defin}{Definition}
\newcommand{\cg}{[\kern-0.15em [}
\newcommand{\cd}{]\kern-0.15em]}
\newcommand{\E}{\mathbb{E}}
\newcommand{\Prob}{\mathbb{P}}
\newcommand{\R}{\mathbf{R}}
\newcommand{\N}{\mathbf{N}}
\newcommand{\dd}{\mathrm{d}}
\title{Random walk in random environment and their time-reversed counterpart }
\author{Rémy Poudevigne--Auboiron}
\date{} % Activate to display a given date or no date (if empty),
\begin{document}
\maketitle
\begin{abstract}
The random walk in Dirichlet environment is a random walk in random environment where the transition probabilities are independent Dirichlet random variables. This random walk exhibits a property of statistical invariance by time-reversal which leads to several results. More precisely, a time-reversed random walk in Dirichlet environment (with null divergence) is also a random walk in random environment where the transition probabilities are independent Dirichlet random variables with different parameters. We show that on all graphs that satisfy a few weak assumptions, a random walk in random environment with independent transition probabilities and such that the transition probabilities of the time-reversed random walk in random environment are also independent is a random walk in Dirichlet environment. 
\end{abstract}
\section{Introduction and results}
\subsection{Introduction}

In this paper we deal with random walks in random environments (RWRE) with independent transition probability. This model has been studied for several decades \cite{zeitouni}: since the 80s in dimension 1 and for two decades in higher dimensions. In one dimension RWRE exhibit a key property: reversibility. Thanks to this property, the one dimensional case is now well understood (see Solomon $\cite{Solomon}$,Kesten, Kozlov, Spitzer $\cite{Kesten}$ and Sina\"{\i} $\cite{Sinai}$).\\
Unfortunately, in higher dimensions RWRE with iid transition are no longer reversible. For this reason, RWRE in higher dimension is not as well understood as the one dimensional case but important progress has been made. For instance, under some assumptions (see \cite{SznitmanT},\cite{SznitmanEffective},\cite{RamiEquiv},\cite{BergEffectPol},\cite{ElliptCriteria}) regarding the directional transience of the walk and ellipticity, ballisticity and annealed CLT have been proved. Some quenched CLTs have also been proved under stronger assumptions (\cite{SznitmanCLT},\cite{SznitmanZerner},\cite{SeppaTCL},\cite{BergerZeitouniTCL}). Another direction taken was to look at small perturbation of the simple walk (\cite{BricKupia},\cite{SznitZeitCLTBrownian},\cite{ExitBall0},\cite{LowDis1},\cite{LowDis2}). \\
In this paper we look at a specific case of RWRE: random walks in Dirichlet environment. That is to say random walk where the transition probabilities at each site are iid and have a Dirichlet distribution. It was first introduced because of its link to the linearly directed-edge reinforced random walk (\cite{Pemantle}). This model (under an additional property of null-divergence) exhibits a property of invariance after time-reversal, that is to say that the time reversed random walk is also a random walk in Dirichlet environment (\cite{transdim},\cite{transdir1}). This property makes some calculations explicit which allows to find some non-trivial results. For instance, it was shown that in dimension $d\geq 3$, the walk is transient (\cite{transdim}) and there is an invariant distribution for the process seen from the point of view of the particle (\cite{DirEnvirPOVPart}). It is also known for which parameters the walk is directionaly transient (\cite{transdir1},\cite{transdir2},\cite{BouchetSubbal}) and in this case the walk is either ballistic or converges to a stable Levy process (\cite{MoiSubbal}). These are still open questions in the general case. \\
A natural question is whether the random walk in Dirichlet environment is the only RWRE with independent transition probability that exhibits the time-reversal property. We show that, indeed, under some weak conditions on the graph considered, if both the environment and the time-reversed environment have transition probabilities independent at each site, then the transition probabilities follow a Dirichlet distribution (with null-divergence).

\subsection{Definition and statement of the results}

We will first need a few definitions before we can properly state the result. We will look at random environments on directed graphs.
\begin{defin}
An environment $\omega$ on an oriented graph $G=(V,E)$ is a function from the set of edges $E$ to $[0,1]$ such that for any vertex $x\in E$ we have:
\[
\sum\limits_{y,(x,y)\in E} \omega(x,y)=1.
\]
For any oriented graph $G$ let $\Omega^G$ be the set of all environments on $G$.
\end{defin}
The goal of this paper is to study the time-reversed walk. It is obtained by reversing the graph and the environment on this graph. To any oriented graph and any environment on this graph the associated reversed graph and environment are defined as follows.
\begin{defin}
For any graph $(V,E)$, its reversed graph $(\tilde{V},\tilde{E})$ is obtained by keeping all the vertices and flipping all the edges ie: $\tilde{V}=V$ and $\tilde{E}=\left\{ (x,y), (y,x)\in E)\right\}$
\end{defin}
\begin{defin}
Let $(V,E)$ be a graph and $\omega$ an environment on this graph. The reversed environment $\tilde{\omega}$ on the reversed graph $(\tilde{V},\tilde{E})$ is defined by $\tilde{\omega}(x,y)=\omega(y,x)\frac{\pi_y}{\pi_x}$ where $\pi$ is the stationary distribution (i.e for any vertex $x$, $\pi_x=\sum \omega(y,x) \pi_y$).
\end{defin}
For a given environment, it is not easy to compute its reversed environment. As a consequence, for a given graph and a given law on its environments, it is in general hard to compute the law of the reversed environment. However, in the specific case of iid sites with Dirichlet distribution and null divergence, the computation becomes easy. We will now explain what the Dirichlet distribution is. For any family $(\alpha_1,\dots,\alpha_n)$ of positive weights, Dirichlet random variables of parameter $\alpha:=(\alpha_1,\dots,\alpha_{2d})$ have the following density:
\[
\frac{\Gamma\left(\sum\limits_{i=1}^{2d}\alpha_i\right)}{\prod\limits_{i=1}^{2d}\Gamma(\alpha_i)}\left(\prod\limits_{i=1}^{2d}x_i^{\alpha_i-1}\right)\dd x_1\dots \dd x_{2d-1}
\]
on the simplex
\[
\{(x_1,\dots,x_{2d})\in (0,1]^{2d},\sum\limits_{i=1}^{2d}x_i=1\}.
\]
Let $G=(V,E)$ be a directed graph. Let $\alpha:=(\alpha_e)_{e\in E}$ be a family of positive weights. Now, let $\Prob^{G,\alpha}$ be the law on $\Omega$ the environments at each vertex are independent. For any vertex $x\in V$, the law of the family $(\omega(x,y))_{y,(x,y)\in E}$ under $\Prob^{G,\alpha}$ is a Dirichlet law of parameters $(\alpha_{(x,y)})_{y,(x,y)\in E}$. It was proved in $\cite{transdim}$ that if the divergence of $\alpha$ is null:
\[
\forall x\in V,\ \ \sum\limits_{y,(x,y)\in E}\alpha_{(x,y)}=\sum\limits_{y,(y,x)\in E}\alpha_{(y,x)}
\]
then the law of the reversed environment is $\Prob^{\tilde{G},\check{\alpha}}$ with $\check{\alpha}_{(x,y)}=\alpha_{(y,x)}$. This property makes many calculations explicit. We want to see if it is possible to find other law on the set of environments where the transition probabilities are iid at each site and the law of the reversed environment is also iid on each site. We show that under weak assumptions on the graph considered, no such other non-deterministic law exists. To precisely state this, we first need a couple of definitions regarding graphs.  
\begin{defin}
A directed graph $(V,E)$ is strongly connected if for any pair of vertices $(x,y)\in V^2$ there exists a path along the oriented edges of the graph that goes from $x$ to $y$. 
\end{defin}
\begin{defin}
A directed graph $(V,E)$ is 2-connected if it is strongly connected and if for any vertex $x\in V$, the graph obtained by removing $x$ and all the edges coming from $x$ or going to $x$ is still strongly connected.
\end{defin}
Now we can finally state the theorem.
\begin{theo}
Let $(V,E)$ be a finite directed graph and $\omega$ transition probabilities on this graph that satisfy the following properties:
\begin{itemize}
\item the graph has no multiple edges,
\item the graph and the reversed graph are 2-connected,
\item the transition probabilities are of positive expectation,
\item the transition probabilities are independent.
\end{itemize}
If the transition probabilities of the reversed environment are independent then, the transition probabilities are independent Dirichlet random variables with null divergence or are deterministic. 
\end{theo}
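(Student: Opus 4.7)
My plan is to work at the level of joint densities and convert the two independence hypotheses into a functional equation that forces a Dirichlet shape, using 2-connectedness of both $G$ and $\tilde G$ to carry out the necessary local perturbations.

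If the law of $\omega$ is not singular (the singular case is what the ``deterministic'' alternative of the theorem covers), then by the independence assumption at each vertex the joint density on $\prod_x \Delta_x$ factors as $\prod_{x\in V} f_x\bigl(\omega(x,\cdot)\bigr)$, each $f_x$ being a density on the simplex of out-edges from $x$. The assumption that $\tilde\omega$ has independent transitions gives a similar factorization $\prod_{x\in V} g_x\bigl(\tilde\omega(x,\cdot)\bigr)$ for the reversed environment. The map $\omega\mapsto\tilde\omega$ is smooth on the open set where $\pi$ is strictly positive; computing its Jacobian $J(\omega)$ (the same calculation that drives the time-reversal invariance of Dirichlet laws in \cite{transdim}) turns the change of variables into the almost-everywhere identity
\[
\prod_{x\in V} f_x(\omega(x,\cdot))\;=\;J(\omega)\,\prod_{x\in V} g_x(\tilde\omega(x,\cdot)).
\]

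Taking logarithms and then partial derivatives, I would perform admissible perturbations of $\omega$: at a fixed vertex $x$, raise $\omega(x,y_1)$ and lower $\omega(x,y_2)$ by the same amount, keeping the rest of the environment fixed. Such a move changes $f_x$ and $\log J$ in a controllable, purely local way, but changes every $g_z$ through the global dependence on the stationary distribution $\pi$. The main technical work is to decouple this $\pi$-dependence, and this is where 2-connectedness is used: because removing any single vertex leaves $G$ and $\tilde G$ strongly connected, one can pair up perturbations at well-chosen edges whose first-order effects on $\pi$ cancel, thereby isolating the ``local'' part of the equation. Iterating such comparisons reduces the problem to a classical functional equation on the open simplex, whose solution forces each $\log f_x$ to be an affine combination of the $\log\omega(x,y)$, $(x,y)\in E$, i.e.\ a Dirichlet density on $\Delta_x$. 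Matching the parameters coming from the two sides of the density identity then yields the null-divergence relation $\sum_{y,(x,y)\in E}\alpha_{(x,y)}=\sum_{y,(y,x)\in E}\alpha_{(y,x)}$, and the positive-expectation hypothesis ensures that every $\alpha_{(x,y)}$ is strictly positive.

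The principal obstacle is precisely the global dependence of $\pi$ on $\omega$: changing $\omega$ at a single vertex alters $\pi$, and hence $\tilde\omega$, at every vertex, so naively comparing the two sides of the density identity produces a tangled system of equations rather than one equation per vertex. The delicate combinatorial bookkeeping required to find perturbations whose $\pi$-effects cancel, and to do so enough times to recover an equation in the out-transitions of a single vertex, is where 2-connectedness of both $G$ and $\tilde G$ is genuinely needed; I expect this step to be the most technical. Once the local functional equation is isolated, the classification of its solutions as Dirichlet densities is a standard consequence of the form of the simplex gradient of the logarithm.
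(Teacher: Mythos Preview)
Your approach is genuinely different from the paper's: you work with densities and differential perturbations, while the paper works entirely with integer moments. There is, however, a real gap at the very first step. You write that ``the singular case is what the `deterministic' alternative of the theorem covers,'' but this is false: a probability law on a simplex can fail to have a density with respect to Lebesgue measure without being a Dirac mass (a distribution supported on a lower-dimensional face, a finite mixture of Dirac masses, a singular continuous law, etc.). The theorem's dichotomy is ``Dirichlet or deterministic,'' not ``Dirichlet or singular,'' so you cannot assume a density exists and relegate everything else to the deterministic branch. Even if your perturbation step were carried out in full, you would only have established ``if the law is absolutely continuous then it is Dirichlet,'' leaving the singular-but-not-deterministic regime entirely untreated. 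A second, softer gap is that the heart of your plan---pairing perturbations so that their first-order effects on $\pi$ cancel---is asserted rather than executed; it is not clear that 2-connectedness alone furnishes enough such pairings to isolate a purely local equation at every vertex.

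The paper sidesteps both issues by working with moments
\[
f_x\Bigl(\sum_{y\in V_x} n_{xy}\overrightarrow{xy}\Bigr)=\E\Bigl(\prod_{y\in V_x}\omega(x,y)^{n_{xy}}\Bigr),
\]
which always exist for bounded random variables and determine the law. The key observation is that for any null-divergence integer function $N:E\to\N$, the stationary measure cancels \emph{exactly} in the identity $\prod_{(x,y)}\check\omega(y,x)^{N(x,y)}=\prod_{(x,y)}\omega(x,y)^{N(x,y)}$, so after taking expectations and using independence one gets an exact algebraic compatibility between the moment functions of $\omega$ and $\check\omega$---no infinitesimal cancellation of $\pi$-effects is needed. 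The 2-connectedness hypothesis is then exploited combinatorially (Lemmas~\ref{lem:2vertex} and~\ref{lem:2path}) to build enough cycles and pairs of disjoint paths to force the moments into the product form $\prod_y h_{(x,y)}(n_{xy})\big/\tilde h_x(\sum_y n_{xy})$ (Lemmas~\ref{lem:hfunctions} and~\ref{lem:deltacorrection}). Finally, a direct recursion on the moments (Lemma~\ref{lem:momentstoDirichlet}) shows that any such factorization forces either Dirichlet moments or $\E[\omega(x,y)^n]=c_{xy}^n$ for all $n$, the latter being exactly the deterministic case. This moment route therefore handles the Dirichlet/deterministic dichotomy cleanly, without ever assuming a density.
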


\section{The proof}
\subsection{A few general results on graphs}

We will use the following notations for simplification.
\begin{defin}
Let $G=(V,E)$ be a directed graph, for every vertex $x\in V$ we define $E_x$ as the set of all edges starting at $x$ and $E^x$ as the set of all edges ending at $x$. And we also define: $V_x=\{y\in V, (x,y)\in E_x\}$ and $V^x=\{y\in V, (y,x)\in E^x\}$.
\end{defin}
In almost all the proofs we will use paths and cycle on graphs. They are defined as follows.
\begin{defin}
A path $\gamma$ on a graph $(V,E)$ is a sequence of vertices $(\gamma_1,\dots,\gamma_n)$ such that for any $i< n, (\gamma_i,\gamma_{i+1})\in E$.\\
The length of $|\gamma|$ of the path $\gamma$ is equal to $n$. \\
We will write $(x,y)\in \gamma$ to say that there exists an integer $i\in [\![1,n-1]\!]$ such that $(x,y)=(\gamma_i,\gamma_{i+1})$.\\
A cycle $\sigma$ is a path such that $\sigma_1=\sigma_{|\sigma|}$. 
\end{defin}
We will now prove a couple of results on oriented graphs that will help us for the proof of the main result. 
\begin{lem}\label{lem:2vertex}
If (V,E) be a 2-connected graph, with no multiple edges and at least three vertices, then for every $x\in V$, $E_x$ contains at least two edges.
\end{lem}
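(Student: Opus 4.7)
The plan is to argue by contradiction. Suppose there is some vertex $x\in V$ with $|E_x|\le 1$.

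First I would rule out the case $|E_x|=0$: since the graph is strongly connected and contains at least one vertex other than $x$, there must be a path from $x$ to that vertex, so $x$ needs at least one outgoing edge. So we may assume $E_x$ consists of exactly one edge. Because the graph has no multiple edges, this single edge has a well-defined endpoint $y\ne x$, and every path leaving $x$ must begin with the step $(x,y)$.

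Next I would use the third-vertex hypothesis: since $|V|\ge 3$, pick some $z\in V\setminus\{x,y\}$. Now apply 2-connectedness with the removed vertex being $y$: after deleting $y$ together with all edges of the form $(y,\cdot)$ and $(\cdot,y)$, the graph that remains must still be strongly connected, and it still contains both $x$ and $z$. But in that smaller graph the vertex $x$ has no outgoing edge at all, because its unique outgoing edge $(x,y)$ was incident to the deleted vertex $y$. Hence $x$ cannot reach $z$ in the reduced graph, contradicting strong connectedness.

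The proof is essentially a one-shot application of the definition, so there is no real obstacle; the only thing to be slightly careful about is to invoke the \emph{no multiple edges} hypothesis to conclude that $|E_x|=1$ actually forces a single target vertex $y$ (otherwise two parallel edges from $x$ to the same vertex would satisfy $|E_x|=2$ without giving $x$ a second neighbour, and the same contradiction argument would still go through, but the cleanest statement uses simplicity). The assumption that $|V|\ge 3$ is needed to guarantee the existence of the witness $z$; without it, the two-vertex graph with edges $(x,y)$ and $(y,x)$ would be a counterexample.
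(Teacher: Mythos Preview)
Your argument is correct and follows essentially the same route as the paper's proof: rule out $|E_x|=0$ by strong connectedness, then rule out $|E_x|=1$ by removing the unique neighbour $y$ and observing that $x$ becomes a source-less vertex in the reduced graph. Your version is slightly more explicit than the paper's in spelling out where the hypotheses $|V|\ge 3$ and ``no multiple edges'' are used, but the underlying idea is identical.
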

\begin{proof}
Let $x$ be a vertex in $V$, then $E_x$ contains at least one edge because otherwise there would be no non-trivial path starting at $x$ and the graph would not be strongly connected. Now, if $E_x$ had only one edge $(x,y)$ then for the same reasons, the graph obtained by removing the vertex $y$ would not be strongly connected. Therefore, $E_x$ contains at least two elements
\end{proof}
\begin{lem}\label{lem:2path}
Let (V,E) be a 2-connected graph, with no multiple edges. Let $x_1$,$x_2$ and $y$ be three distinct vertices in $V$. There exists two simple paths $\gamma^1$ and $\gamma^2$ going from $x_1$ to $y$ and from $x_2$ to $y$ respectively such that the only point at which they intersect is $y$ (ie $\gamma^1_i=\gamma^2_j$ iff $\gamma^1_i=y$).
\end{lem}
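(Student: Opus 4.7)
The plan is to reduce this to a single-source two-paths statement by adjoining an auxiliary ``super-source''. I would form an auxiliary directed graph $G^+$ by adding to $V$ a fresh vertex $s$ together with the two edges $(s,x_1)$ and $(s,x_2)$, and nothing else. The target is then to produce two internally vertex-disjoint simple dipaths from $s$ to $y$ in $G^+$: deleting the initial vertex $s$ from each will give the required $\gamma^1$ (starting with $(s,x_1)$) and $\gamma^2$ (starting with $(s,x_2)$), and their internal disjointness in $G^+$ translates directly into $\gamma^1$ and $\gamma^2$ meeting only at $y$.

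By the directed version of Menger's theorem, it is enough to show that no single internal vertex separates $s$ from $y$ in $G^+$, i.e.\ that $G^+ \setminus \{v\}$ still contains an $s$-to-$y$ dipath for every $v \in V(G^+) \setminus \{s,y\}$. I would split this into three cases, all handled by the 2-connectedness of $G$. If $v = x_1$, then $G \setminus \{x_1\}$ is strongly connected, hence contains a dipath from $x_2$ to $y$, and prepending $(s,x_2)$ does the job. The case $v = x_2$ is symmetric. For $v \in V \setminus \{x_1,x_2,y\}$, strong connectedness of $G \setminus \{v\}$ yields a dipath from $x_1$ to $y$ which can be prepended with $(s,x_1)$.

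Applying Menger's theorem in $G^+$ then produces two internally vertex-disjoint dipaths from $s$ to $y$, which can be taken simple after shortcutting repeated vertices. Since $s$ has only the two out-edges $(s,x_1)$ and $(s,x_2)$, the two paths must leave $s$ along distinct edges; erasing $s$ thus leaves a simple dipath from $x_1$ to $y$ and a simple dipath from $x_2$ to $y$ sharing no vertex other than $y$, which is the desired conclusion.

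The main anticipated obstacle is the clean invocation of the directed Menger theorem and the correct bookkeeping in the case analysis above; note that the ``no multiple edges'' hypothesis of the lemma plays no role here (it is used elsewhere in the paper but not in this particular argument), and the whole content really comes from the 2-connectedness of $G$.
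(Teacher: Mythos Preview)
Your argument is correct and it is genuinely different from the paper's. The paper proves the lemma from scratch by a minimality argument: it considers triples consisting of two internally disjoint paths $\gamma^1:x_1\to z$, $\gamma^2:x_2\to z$ together with a ``tail'' $\tilde\gamma:z\to y$, shows this family is non-empty, and then picks a triple with $|\tilde\gamma|$ minimal. If $|\tilde\gamma|>0$, the 2-connectedness gives a path from $x_1$ to $y$ avoiding $z$, and a careful splice produces a triple with strictly shorter tail, a contradiction; hence $z=y$ and the two disjoint paths already end at $y$. Your route instead packages the whole thing into a single application of directed Menger after adjoining a super-source $s$; the case analysis showing that no single vertex of $G^+\setminus\{s,y\}$ separates $s$ from $y$ is exactly where 2-connectedness enters, and the out-degree-$2$ observation at $s$ forces the two Menger paths to exit through $x_1$ and $x_2$ respectively. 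Your proof is shorter and more conceptual, at the cost of quoting Menger as a black box; the paper's argument is entirely self-contained, which fits its elementary style. Your remark that the ``no multiple edges'' hypothesis is irrelevant for this particular lemma is also correct.
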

\begin{proof}
First, for any $z_1,z_2\in V$, let $\Gamma(z_1,z_2)$ be the set of simple paths that go from $z_1$ to $z_2$ along the directed edges of $E$. Now for any three distinct points $x_1,x_2,z\in V$, we look at the set of paths: 
\[
\tilde{\Gamma}^{\prime}(x_1,x_2,y):=\bigcup\limits_{z\in V}\Gamma(x_1,z)\times\Gamma(x_2,z)\times\Gamma(z,y).
\]
Let $\tilde{\Gamma}$ be the subset of $\tilde{\Gamma}^{\prime}$ that only contains triplets $\gamma^1,\gamma^2,\tilde{\gamma}$ such that the only point at which at least two of the paths intersect is the endpoint of $\gamma^1$ and $\gamma^2$ which is also the starting point of $\tilde{\gamma}$. \\
We first want to show that $\tilde{\Gamma}$ is not empty. We take two simple paths $\gamma^1$ and $\gamma^2$ going from $x_1$ to $y$ and from $x_2$ to $y$ respectively. If they do not intersect except at $y$ then we have proved the lemma, otherwise we define:
\[
\tau_2:=\inf\left\{i\geq 0,\exists j\geq 0, \gamma^1_j= \gamma^2_i \right\}<\infty,
\]
and
\[
\tau_1:=\inf\left\{i\geq 0,\gamma^1_i= \gamma^2_{\tau_2} \right\}<\infty.
\]
The triplet $((\gamma^1_0, \dots, \gamma^1_{\tau_1}),(\gamma^2_0, \dots, \gamma^2_{\tau_2}),(\gamma^1_{\tau_1}, \dots, \gamma^1_{|\gamma^1|}))$ is in $\tilde{\Gamma}$ by definition of $\tau_1$ and $\tau_2$. Therefore $\tilde{\Gamma}$ is not empty.
Now we look at a triplet $(\gamma^1,\gamma^2,\tilde{\gamma})\in \tilde{\Gamma}$ that minimizes the length $|\tilde{\gamma}|$. If $\tilde{\gamma}$ is just a point then we have the result we want. Otherwise we show that we can shorten the length of $\tilde{\gamma}$. First, let $z$ be the point at which the three paths of the triplet intersect. Now let $p$ be a path that goes from $x_1$ to $y$ without going through $z$. Let $\tau_1= \sup \{ i, \exists j\in\N, p_i = \gamma^1_j \text{ or } p_i = \gamma^2_j\}$ and $\tau_2= \inf \{i>\tau_1, \exists j\in\N, p_i=\tilde{\gamma}_j\}$. We will assume that $p_{\tau_1}$ is a point of $\gamma_1$ (the proof is exactly the same if it is a point of $\gamma_2$). We also define $\tilde{\tau}_1=\inf \{i\in\N,\gamma^1_i=p_{\tau_1}\}$ and $\tilde{\tau}_2=\sup \{i\in\N,\tilde{\gamma}_i=p_{\tau_2}\}$. The following triplet is an element of $\tilde{\Gamma}$ with a smaller length for the third element compared to $(\gamma^1,\gamma^2,\tilde{\gamma})$:
\[
((\gamma^1_0,\dots,\gamma^1_{\tilde{\tau}_1},p_{\tau_1+1},\dots,p_{\tau_2}),
(\gamma^2_0,\dots,\gamma^2_{|\gamma^2|},\tilde{\gamma}_1,\dots,\tilde{\gamma}_{\tilde{\tau}_2}),
(\tilde{\gamma}_{\tilde{\tau}_2},\dots,\tilde{\gamma}_{\tilde{\gamma}}))
\]
and therefore we have the desired result.
\end{proof}

\subsection{Moment functions}

\begin{defin}
Let $G=(V,E)$ be a directed graph. A function $N: E \mapsto \R$ is said to be of null divergence if: \\
\[
\forall x\in V, \sum\limits_{e\in E_x} N(e)= \sum\limits_{e\in E^x} N(e).
\]
\end{defin}
\begin{defin}
A moment function $f$ of a graph $(V,E)$ will be a set of functions $\{f_x,x\in V \}$ such that for every vertex $x\in V$, $f_x$ is a function from $\N^{E_x}$ to $(0,\infty)$. To simplify notation, instead of writing:
\[
f_x(n_{xy_1},n_{xy_2},\dots,n_{xy_r})
\]
we will write:
\[
f_x\left(\sum\limits_{e\in E_x} n_e \vec{e}\right).
\]
\end{defin}
\begin{defin}
Let $G=(V,E)$ be a graph, $f$ a moment function of $G$ and $\check{f}$ be a moment function of the reversed graph $\tilde{G}=(v,\tilde{E})$. $f$ and $\check{f}$ are compatible if for all functions $N: E \mapsto \N$ of null divergence:  \\
\[
\prod\limits_{x\in V}f_x\left(\sum\limits_{e\in E_x} N(e) \vec{e}\right)
=\prod\limits_{x\in V}\check{f}_x\left(\sum\limits_{y\in V^x} N((x,y)) \overrightarrow{yx}\right).
\]
\end{defin}
Let $(V,E)$ be a graph and $\omega$ a random environment on this graph that both satisfy the conditions of the theorem. To prove our theorem, we first want to show that the moment of our transition probabilities are of the form :
\[
\E\left(\prod\limits_{y\in V^x} \omega(x,y)^{n_{(x,y)}}\right)=\frac{\prod\limits_{y\in V^x} h_{(x,y)}(n_{(x,y)})}{\tilde{h}_x\left(\sum\limits_{y\in V^x}n_{(x,y)}\right)},
\]
for some functions $(h_{e})_{e\in E}$ ad $(\tilde{h}_x)_{x\in V}$. In order to do that, we first need to find suitable candidates for the functions $(h_e)_{e\in E}$ and $(\tilde{h}_x)_{x\in V}$. The following lemma will help us do that (in the proof of the theorem where we prove that the moments of the transition probabilities of the environment $\omega$ and its reversed environment are compatible moment functions).
\begin{lem}\label{lem:hfunctions}
Let $G=(V,E)$ be a 2-connected directed graph. Let $f$ be a moment function of $G$ and $\check{f}$ be a moment function of the reversed graph $\tilde{G}$ such that $f$ and $\check{f}$ are compatible. We also assume that: \\
\[
\forall x\in V, f_x(0)=\check{f}_x(0)=1.
\]  
Then for every vertex $x\in V$ there exists a function $\tilde{h}_x: \N \mapsto (0,\infty)$ and for every edge $e \in E$ there exists a function $h_e:  \N \mapsto (0,\infty)$ such that:
\[
\begin{aligned}
&\forall x\in V, \forall y\in V_x, \forall n\in \N, f_x(n\overrightarrow{xy})=\frac{h_{(x,y)}(n)}{\tilde{h}_x(n)} \text{ and }\\
&\forall x\in V, \forall y\in V^x, \forall n\in \N, \check{f}_x(n\overrightarrow{xy})=\frac{h_{(y,x)}(n)}{\tilde{h}_x(n)}.
\end{aligned}
\]
\end{lem}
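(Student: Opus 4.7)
The strategy is to extract a multiplicative cocycle on the directed edges of $G$ by applying the compatibility identity to indicator functions of simple directed cycles, and then to integrate this cocycle along directed paths from a fixed base vertex.

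For any simple directed cycle $\sigma=(v_0,v_1,\ldots,v_k=v_0)$ in $G$ and any $n\in\N$, the function $N(e):=n\,\mathbf{1}_{e\in\sigma}$ is a non-negative integer-valued function of null divergence (at each vertex on $\sigma$ exactly one incoming and one outgoing edge is used, and at every other vertex none). Plugging $N$ into the compatibility identity and using $f_v(0)=\check{f}_v(0)=1$ for vertices off $\sigma$, all factors outside the cycle drop out and one obtains
\[
\prod_{i=0}^{k-1} f_{v_i}\!\left(n\,\overrightarrow{v_i v_{i+1}}\right)
=\prod_{i=0}^{k-1} \check{f}_{v_{i+1}}\!\left(n\,\overrightarrow{v_{i+1} v_i}\right).
\]
Defining $R_{(u,v)}(n):= f_u(n\,\overrightarrow{uv})/\check{f}_v(n\,\overrightarrow{vu})$ for every directed edge $(u,v)\in E$, this reads $\prod_{(u,v)\in\sigma} R_{(u,v)}(n)=1$. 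A straightforward induction on length, splitting any non-simple closed directed walk at a repeated vertex into two strictly shorter closed walks, extends this to $\prod_W R(n)=1$ for every closed directed walk $W$ in $G$.

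Now fix a base vertex $x_0\in V$ and set $\tilde{h}_{x_0}(n):=1$. For each vertex $y$, choose a directed path $p_y$ from $x_0$ to $y$ (possible by strong connectivity, which is implied by 2-connectedness) and set $\tilde{h}_y(n):=\prod_{(u,v)\in p_y} R_{(u,v)}(n)$. This value is independent of the choice of $p_y$: if $p_y'$ is another such path and $q$ is any directed path from $y$ to $x_0$, then $p_y\cdot q$ and $p_y'\cdot q$ are closed directed walks whose $R$-products both equal $1$, and dividing gives equality of the products along $p_y$ and $p_y'$. Finally, set $h_{(x,y)}(n):=f_x(n\,\overrightarrow{xy})\,\tilde{h}_x(n)$ for every edge $(x,y)\in E$. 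The first identity of the lemma then holds by construction, and the second, $\check{f}_x(n\,\overrightarrow{xy})=h_{(y,x)}(n)/\tilde{h}_x(n)$, is equivalent to $\tilde{h}_x(n)=\tilde{h}_y(n)\cdot R_{(y,x)}(n)$, which follows from appending the edge $(y,x)$ to $p_y$ and invoking path-independence. The main obstacle is precisely the verification that $\tilde{h}$ is well-defined; once the cocycle relation along arbitrary closed directed walks is secured via the indicator functions $n\,\mathbf{1}_\sigma$, everything else is bookkeeping.
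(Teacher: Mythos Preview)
Your proof is correct and follows essentially the same route as the paper's: the ratio $R_{(u,v)}(n)$ is exactly the function $g_{(u,v)}(n)$ defined there, the cocycle relation on cycles is obtained from the same null-divergence function $n\,\mathbf{1}_\sigma$, and the definition of $\tilde{h}_y(n)$ as a product of $R$'s along a path from a fixed base vertex (shown to be path-independent via concatenation with a return path) matches the paper verbatim. The only cosmetic differences are your explicit induction for non-simple closed walks (the paper just says ``any cycle is a union of simple cycles'') and your slightly more streamlined verification of the second identity.
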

\begin{proof}
For $n=0$, the result is obvious, we just need to take  $h_e(0)=1$ for all $e\in E$ and $ \tilde{h}_x(0)=1$ for all $x\in V$. Now we choose $n\geq 1$. For every edge $(x,y)\in E$ we write: \\
\[
g_{(x,y)}(n):=\frac{f_x(n\overrightarrow{xy})}{\check{f}_y(n \overrightarrow{yx})}.
\]
For a simple cycle (i.e a cycle that never visits a point more than once, except for the first point because it is also the last one) $\sigma$, the compatibility of $f$ and $\check{f}$ tells us that:
\[
\prod\limits_{i} f_{\sigma(i)}(n \overrightarrow{\sigma(i)\sigma(i+1)})=\prod\limits_{i} \check{f}_{\sigma(i)}(n \overrightarrow{\sigma(i)\sigma(i-1)}),
\]
which means:
\[
\prod\limits_{i} g_{(\sigma(i),\sigma(i+1))}(n)=1.
\]
Since any cycle is the union of simple cycles, the above property is true for any cycle. Now, we choose a vertex $x\in V$ and we set $\tilde{h}_x(n):=1$. Let $y$ be a vertex in $V$, $\gamma^1$ and $\gamma^2$ two paths from $x$ to $y$ and $\tilde{\gamma}$ a path from $y$ to $x$. We have:
\[
\prod\limits_{i} g_{(\gamma^1_i,\gamma^1_{i+1})}(n)\prod\limits_{i} g_{(\tilde{\gamma}_i,\tilde{\gamma}_{i+1})}(n)=1,
\]
and
\[
\prod\limits_{i} g_{(\gamma^2_i,\gamma^2_{i+1})}(n)\prod\limits_{i} g_{(\tilde{\gamma}_i,\tilde{\gamma}_{i+1})}(n)=1.
\]
Therefore,
\[
\prod\limits_{i} g_{(\gamma^1_i,\gamma^1_{i+1})}(n)=\prod\limits_{i} g_{(\gamma^2_i,\gamma^2_{i+1})}(n).
\]
This equality allows us to define $\tilde{h}_y(n)$ by:
\[
\tilde{h}_y(n):=\prod\limits_{i} g_{(\gamma^1_i,\gamma^1_{i+1})}(n),
\]
because it does not depend on the path we chose to get to $y$. Now let $y_1$ and $y_2$ be two vertices such that $(y_1,y_2)$ is an edge. Let $\gamma$ be a path that goes from $x$ to $y_1$ and $\tilde{\gamma}$ be the same path to which we add the edge $(y_1,y_2)$ at the end, we have:
\[
g_{(y_1,y_2)}(n)=\frac{g_{(y_1,y_2)}(n)\prod g_{(\gamma(i),\gamma(i+1))}(n)}{\prod g_{(\gamma(i),\gamma(i+1))}(n)}
=\frac{\prod g_{(\tilde{\gamma}(i),\tilde{\gamma}(i+1))}(n)}{\prod g_{(\gamma(i),\gamma(i+1))}(n)}
=\frac{\tilde{h}_{y_2}(n)}{\tilde{h}_{y_1}(n)}.
\]
And now, if we choose $h_{(y_1,y_2)}(n)$ such that:
\[
f_{y_1}(n\overrightarrow{y_1y_2})=\frac{h_{(y_1,y_2)}(n)}{\tilde{h}_{y_1}(n)},
\]
then we also have:
\[
\check{f}_{y_2}(n\overrightarrow{y_2y_1})=\frac{h_{(y_1,y_2)}(n)}{\tilde{h}_{y_2}(n)}.
\]
\end{proof}
Unfortunately the functions we have found are not uniquely defined. Indeed, for any $n\in \N^*$ we can multiply all values $\left(h_e(n)\right)_{e\in E}$ and $(\tilde{h}_x)_{x\in V}$ by a constant $\Delta(n)\not =0$ and all the properties of the previous lemma would stay true. The goal of the next lemma is to show that it is the only change we can make to the previous functions. This will be used, in addition to the previous lemma, to show (in the proof of the theorem) that the moments of transition probabilities $\omega$ that satisfy the conditions of the theorem are of the form:
\[
\E\left(\prod\limits_{y\in V^x} \omega(x,y)^{n_{(x,y)}}\right)=\frac{\prod\limits_{y\in V^x} h_{(x,y)}(n_{(x,y)})}{\tilde{h}_x\left(\sum\limits_{y\in V^x}n_{(x,y)}\right)},
\]
for some functions $(\tilde{h}_x)_{x\in V}$ and $(h_e)_{e\in E}$.
\begin{lem}\label{lem:deltacorrection}
Let $G=(V,E)$ be a 2-connected graph and $f$ and $\check{f}$ two compatible moment functions such that:
\[
\forall (x,y)\in E, \forall n\in N, f_x(n\overrightarrow{xy})=\check{f}_y(n\overrightarrow{yx})=1.
\]
There exists $\Delta: \N \mapsto (0,\infty)$ with $\Delta(0)=\Delta(1)=1$ such that:
\[
\forall x\in V, f_x\left(\sum\limits_{y\in V_x} N((x,y))\overrightarrow{xy}\right)=\frac{\prod\limits_{y\in V_x} \Delta(N((x,y)))}{\Delta\left(\sum\limits_{y\in V_x} N((x,y))\right)}
\]
and,
\[
\forall x\in V, \check{f}_x\left(\sum\limits_{y\in V^x} N((y,x))\overrightarrow{xy}\right)=\frac{\prod\limits_{y\in V^x} \Delta(N((y,x)))}{\Delta\left(\sum\limits_{y\in V^x} N((y,x))\right)}.
\]
\end{lem}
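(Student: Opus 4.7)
The plan is to read off $\Delta$ from the two-edge values of $f$ and $\check f$, and then to prove the multi-edge formula in one shot via a carefully designed ``merge network'' flow. First, for a vertex $y$ of in-degree at least $2$ (which exists everywhere by Lemma \ref{lem:2vertex} applied to $\tilde G$), I would tentatively define $F^{(2)}(n_1, n_2) := \check f_y(n_1\overrightarrow{yz_1} + n_2\overrightarrow{yz_2})$ for two distinct incoming edges, and apply the compatibility relation to a null-divergence flow made of two simple cycles: two commodities leave a chosen vertex $x$ on two prescribed outgoing edges with flows $n_1, n_2$, travel along edge-disjoint paths to $y$, enter $y$ on two distinct incoming edges, merge into a single edge out of $y$ carrying $n_1+n_2$, and return to $x$ along a common tail. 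All intermediate vertices see only single-edge arguments, so by the hypothesis $f_u(n\overrightarrow{uv}) = \check f_v(n\overrightarrow{vu}) = 1$ every factor along the way collapses to $1$, leaving $f_x(n_1\overrightarrow{xw_1} + n_2\overrightarrow{xw_2}) = \check f_y(n_1\overrightarrow{yz_1} + n_2\overrightarrow{yz_2})$. Varying $x$, $y$ and the chosen edges shows $F^{(2)}$ is well defined independently of vertex and edge choice, and equals its $\check f$-analogue.

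Next I would handle arbitrary out-degree at once. Fix $x$ with outgoing edges $y_1,\dots,y_k$ and an assignment $n_1,\dots,n_k$, and build a null-divergence flow in which the $k$ commodities leave $x$ on those edges and are merged sequentially at $k-1$ distinct intermediate vertices $v_1,\dots,v_{k-1}$ of in-degree $\geq 2$: at $v_j$ the commodities of flows $n_{k-j}$ and $n_{k-j+1}+\cdots+n_k$ enter on two distinct edges and exit together on one edge, and after the last merge the full flow of size $\sum n_i$ returns to $x$ on a single edge. On the $f$-side only $f_x(\sum n_i \overrightarrow{xy_i})$ is nontrivial; on the $\check f$-side each $v_j$ contributes $F^{(2)}(n_{k-j},\, n_{k-j+1}+\cdots+n_k)$ and $x$ contributes $1$ (single-edge incoming), so compatibility yields
\[
f_x\left(\sum_{i=1}^k n_i\,\overrightarrow{xy_i}\right) = \prod_{i=1}^{k-1} F^{(2)}\left(n_i,\; n_{i+1}+\cdots+n_k\right).
\]

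Symmetry $F^{(2)}(m,n) = F^{(2)}(n,m)$ follows from the edge-independence established in the first step. Applying the factorisation for $k=3$ with $(a,b,c)$ in two different merging orders, together with symmetry, yields the cocycle $F^{(2)}(a,b+c)\,F^{(2)}(b,c) = F^{(2)}(a+b,c)\,F^{(2)}(a,b)$. I would then set $\Delta(0):=\Delta(1):=1$ and recursively $\Delta(n+1):=\Delta(n)/F^{(2)}(n,1)$, and prove by induction on $m+n$ (using the cocycle with $c=1$) that $F^{(2)}(m,n) = \Delta(m)\Delta(n)/\Delta(m+n)$. Substituting into the factorisation, the product telescopes to $\prod_{i=1}^k \Delta(n_i)/\Delta(\sum_i n_i)$, which is the formula claimed for $f_x$; the identical formula for $\check f_x$ with the same $\Delta$ follows either by a symmetric construction with the roles of $f$ and $\check f$ swapped, or directly from $F^{(2)} = \check F^{(2)}$.

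The main obstacle will be the combinatorial graph-theoretic step of constructing the merge-network cycles so that their intersection pattern is exactly as prescribed: they must realise the prescribed outgoing structure at $x$ and the prescribed 2-edge incoming structure at each $v_j$, while every other vertex is visited only once on a single edge so its contributions to both sides of the compatibility equation are $1$. The 2-connectedness of both $G$ and $\tilde G$, combined with iterated application of Lemma \ref{lem:2path}, is exactly the structural hypothesis that permits routing the required paths while dodging designated vertices; tracking all the intersection invariants cleanly is where most of the technical work will lie.
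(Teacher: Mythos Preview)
Your algebraic reformulation --- extracting a universal two-argument function $F^{(2)}$, deriving the cocycle identity $F^{(2)}(a,b+c)\,F^{(2)}(b,c)=F^{(2)}(a+b,c)\,F^{(2)}(a,b)$, and solving it by $F^{(2)}(m,n)=\Delta(m)\Delta(n)/\Delta(m+n)$ --- is a genuinely different and cleaner organisation than the paper's. The paper never isolates $F^{(2)}$; it builds $\Delta(n)$ directly by induction on $n$ (first $\Delta(2)$, then $\Delta(n+1)$ from $\Delta(2),\dots,\Delta(n)$), each time producing a vertex-local value $\Delta_x$ from a flow made of $n+1$ cycles through $x$ and then proving $\Delta_x=\Delta_y$ by a chaining argument along edges.

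The gap in your plan is the combinatorial input, and it is not merely technical. In step~1 you route two commodities from $x$ to a \emph{distinct} vertex $y$ along internally disjoint paths and then bring the merged flow back to $x$ on a third path disjoint from both; that requires three internally vertex-disjoint $x$--$y$ paths, which $2$-connectedness does not furnish. The paper sidesteps this by taking the merge point to be $x$ itself (two vertex-disjoint cycles through $x$, supplied by Lemma~\ref{lem:2path}), which only yields $f_x(\cdot)=\check f_x(\cdot)$; edge-independence at a fixed vertex then needs a separate three-out-neighbour swap, and vertex-independence a separate chaining along edges --- neither of which your sketch provides. In step~2 your merge network requires every intermediate vertex to carry a single nonzero in-edge and a single nonzero out-edge, except for the $k-1$ designated two-in/one-out merge vertices. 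In a generic $2$-connected graph the $k$ branches and the return cannot be routed this cleanly: some vertices are forced to be traversed twice (once outbound on a branch, once on the return), acquiring two nonzero in-edges \emph{and} two nonzero out-edges; your assertion ``on the $f$-side only $f_x$ is nontrivial'' then fails, and you would need a further cancellation of matching $F^{(2)}$-factors on the two sides that is neither stated nor automatic. The paper's induction on $n$ is exactly what absorbs this difficulty: only the first two cycles $\sigma^1,\sigma^2$ are required to be vertex-disjoint, while $\sigma^3,\dots,\sigma^{n+1}$ are arbitrary simple cycles through $x$; consequently every vertex other than $x$ carries total flow at most $n$ and is handled by the inductive hypothesis rather than by any disjointness property of the flow.
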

\begin{proof}
We will construct $\Delta$ by induction. There will be two parts to the proof, first the existence of $\Delta(2)$ and then the existence of $\Delta(i)$ for $i\geq 3$.\\
First we prove the existence of $\Delta(2)$. Let $x\in V$ be a vertex, we want to show that there exists $\Delta_x(2)$ such that:
\[
\begin{aligned}
&\forall y_1,y_2\in V_x, y_1\not=y_2 \implies f_x(\overrightarrow{xy_1}+\overrightarrow{xy_2})=\frac{\Delta(1)^2}{\Delta_x(2)}= \frac{1}{\Delta_x(2)} \\
\text{and } &\forall y_1,y_2\in V^x, y_1\not=y_2 \implies \check{f}_x(\overrightarrow{xy_1}+\overrightarrow{xy_2})= \frac{1}{\Delta_x(2)}.
\end{aligned}
\]
If both $V_x$ and $V^x$ have only two elements then we call the two elements of $V_x$: $y_1$ and $y_2$. By lemma \ref{lem:2path} there exists two paths $\gamma^1$ and $\gamma^2$ that go respectively from $y_1$ to $x$ and from $y_2$ to $x$ and that only intersect in $x$. We call $z_1$ and $z_2$ the vertices such that the last edges crossed by $\gamma^1$ and $\gamma^2$ are $(z_1,x)$ and $(z_2,x)$ respectively. We call $\sigma^1$ and $\sigma^2$ the simple cycles such that $\sigma^1$ goes through $(x,y_1)$ and then follows the path $\gamma^1$ and $\sigma^2$ goes through $(x,y_2)$ and then follows the path $\gamma^2$. By definition of $\gamma^1$ and $\gamma^2$, the cycles $\sigma^1$ and $\sigma^2$ only intersect in $x$, so every vertex other than $x$ is visited by at most  only one of the two cycles (and only once because they are simple cycles), in particular $z_1\not = z_2$. We have that:
\[
\begin{aligned}
&f_x(\overrightarrow{xy_1}+\overrightarrow{xy_2})
\prod\limits_{v\in V\backslash\{x\}} f_v\left(\sum\limits_{u\in V_v}(1_{(v,u)\in \sigma^1}+1_{(v,u)\in \sigma^2})\overrightarrow{vu}\right)\\
=&\check{f}_x(\overrightarrow{xz_1}+\overrightarrow{xz_2})
\prod\limits_{v\in V\backslash\{x\}} \check{f}_v\left(\sum\limits_{u\in V^v}(1_{(u,v)\in \sigma^1}+1_{(u,v)\in \sigma^2})\overrightarrow{vu}\right).
\end{aligned}
\]
This means that
\[
f_x(\overrightarrow{xy_1}+\overrightarrow{xy_2})=\check{f}_x(\overrightarrow{xz_1}+\overrightarrow{xz_2}).
\]
Therefore $\Delta_x(2)$ exists.\\  
	Otherwise we can assume, without loss of generality, that $V_x$ has at least three elements (if it is not the case, then $V^x$ has at least three elements by lemma \ref{lem:2vertex} and we just have to consider the reversed graph). For the sake of clarity, the figure below describes what is written in this paragraph. Let $y_1,y_2\text{ and }y_3\in V_x$ be three distinct vertices. Let $\gamma^1$ and $\gamma^2$ be two simple paths that go from $y_1$ to $x$ and from $y_2$ to $x$ respectively and that only intersect in $x$. We call $z_1$ the vertex such that $(z_1,x)$ is the last edge crossed by $\gamma^1$ and $z_2$ the vertex such that $(z_2,x)$ is the last edge crossed by $\gamma^2$. Now let $\gamma^3$ be a path from $y_3$ to $z_1$ or $z_2$ that does not go through $x$. Let $\tau_1$ be the first time such that $\gamma^3(\tau_1)$ is either a vertex of $\gamma^1$ or $\gamma^2$ and let $A$ be the corresponding vertex, $A:=\gamma^3(\tau_1)$. We can assume that $A$ is a vertex of $\gamma^1$ (if it is not the case we can just exchange the role of $y_1$ and $y_2$). Let $\tau_2$ be the time such that $\gamma^1_{\tau_2}=\gamma^3_{\tau_1}$. Let $\tilde{\gamma}^3$ be the path that starts at $y_3$ then follows $\gamma^3$ up to $\gamma^3_{\tau_1}$ and then follows $\gamma^1$ from $\gamma^1_{\tau_2}$ to $x$. In particular, the last edge crossed by $\tilde{\gamma}^3$ is $(z_1,x)$. 
\[
\begin{matrix}
\begin{tikzpicture}
\draw[*-,>=latex] (-1,1)node[above]{$y_1$} to[out=180,in=90] (-1.6,0)node[left]{$A$};
\draw[*-,>=latex] (-1.6,0) to[out=-90,in=180] (-1,-1)node[below]{$z_1$};
\draw[*->,>=latex] (-1,-1) to[out=0,in=-90] (0,0)node[above]{$x$};
\draw[*-,>=latex] (1,1)node[above]{$y_2$} to[out=0,in=0] (1,-1)node[below]{$z_2$};
\draw[*->,>=latex] (1,-1) to[out=180,in=-90] (0,0);
\draw[*-,>=latex] (0,2)node[above]{$y_3$} to[out=180,in=145] (-1.6,-0.05);
\draw[->,>=latex] (-1.6,-0.05) to[out=-45,in=90] (0.9,-1);
\end{tikzpicture}
&\begin{tikzpicture}
\draw[*-,>=latex,thick] (-1,1)node[above]{} to[out=180,in=90] (-1.6,0)node[left]{};
\node at (-1.25,0.5) {$\gamma^1$} ;
\node at (1.25,0.5) {$\gamma^2$} ;
\draw[*-,>=latex,thick] (-1.6,0) to[out=-90,in=180] (-1,-1)node[below]{};
\draw[*->,>=latex,thick] (-1,-1) to[out=0,in=-90] (0,0)node[above]{};
\draw[*-,>=latex,dashed] (1,1)node[above]{} to[out=0,in=0] (1,-1)node[below]{};
\draw[*->,>=latex,dashed] (1,-1) to[out=180,in=-90] (0,0);
\draw[*-,>=latex] (0,2)node[above]{} to[out=180,in=145] (-1.6,-0.05);
\node at (-1.55,1.55) {$\gamma^3$};
\draw[->,>=latex] (-1.6,-0.05) to[out=-45,in=90] (0.9,-1);
\end{tikzpicture}
&\begin{tikzpicture}
\draw[*-,>=latex,dashed] (-1,1)node[above]{} to[out=180,in=90] (-1.6,0)node[left]{$A$};
\draw[*-,>=latex,thick] (-1.6,0) to[out=-90,in=180] (-1,-1)node[below]{$z_1$};
\draw[*->,>=latex,thick] (-1,-1) to[out=0,in=-90] (0,0)node[above]{$x$};
\draw[*-,>=latex,dashed] (1,1)node[above]{} to[out=0,in=0] (1,-1)node[below]{};
\draw[*->,>=latex,dashed] (1,-1) to[out=180,in=-90] (0,0);
\draw[*-,>=latex,thick] (0,2)node[above]{$y_3$} to[out=180,in=145] (-1.6,-0.05);
\node at (-1.55,1.55) {$\tilde{\gamma}^3$};
\draw[->,>=latex,dashed] (-1.6,-0.05) to[out=-45,in=90] (0.9,-1);
\end{tikzpicture} 
\end{matrix}
\]	
Now we have:
\[
\begin{aligned}
&f_x(\overrightarrow{xy_1}+\overrightarrow{xy_2})\prod\limits_{v\in V\backslash \{ x\}}f_v\left(\sum\limits_{u\in V_v} (1_{(v,u)\in \gamma^1}+1_{(v,u)\in \gamma^2})\overrightarrow{vu}\right)\\
=&\check{f}_x(\overrightarrow{xz_1}+\overrightarrow{xz_2})\prod\limits_{v\in V\backslash \{ x\}}\check{f}_v\left(\sum\limits_{u\in V^v} (1_{(u,v)\in \gamma^1}+1_{(u,v)\in \gamma^2})\overrightarrow{vu}\right)
\end{aligned}
\]
Now since every vertex besides $x$ is visited at most once by $\gamma^1$ and $\gamma^2$ (since these two simple paths only intersect in $x$) we get:
\[
f_x(\overrightarrow{xy_1}+\overrightarrow{xy_2})=\check{f}_x(\overrightarrow{xz_1}+\overrightarrow{xz_2})
\]  
by the same arguments, using $\gamma^2$ and $\tilde{\gamma}^3$ we get 
\[
f_x(\overrightarrow{xy_3}+\overrightarrow{xy_2})=\check{f}_x(\overrightarrow{xz_1}+\overrightarrow{xz_2}).
\]
So 
\[
f_x(\overrightarrow{xy_1}+\overrightarrow{xy_2})=f_x(\overrightarrow{xy_3}+\overrightarrow{xy_2})
\]
Now, if we do exactly the same thing except we switch $y_2$ and $y_3$. We get that either 
\[
f_x(\overrightarrow{xy_1}+\overrightarrow{xy_3})=f_x(\overrightarrow{xy_2}+\overrightarrow{xy_1})
\]
or
\[
f_x(\overrightarrow{xy_1}+\overrightarrow{xy_3})=f_x(\overrightarrow{xy_2}+\overrightarrow{xy_3}).
\]
Either way, we get:
\[
f_x(\overrightarrow{xy_1}+\overrightarrow{xy_3})=
f_x(\overrightarrow{xy_1}+\overrightarrow{xy_2})=
f_x(\overrightarrow{xy_2}+\overrightarrow{xy_3})=
\check{f}_x(\overrightarrow{xz_1}+\overrightarrow{xz_2}).
\]
Now either $V^x$ has three elements or more in which case the same arguments hold for $\check{f}$ or it has only two elements and we only have to consider $\check{f}_x(\overrightarrow{xz_2}+\overrightarrow{xz_1})$, in both case we know that $f_x(\overrightarrow{xy_1}+\overrightarrow{xy_2})=\check{f}_x(\overrightarrow{xz_2}+\overrightarrow{xz_1})$ so $\Delta_x(2)$ exists.\\
Now we have to prove that $\Delta_x(2)$ does not depend on $x$. Let $x$ and $y$ be two points such that $(x,y)\in E$. We want to prove that there exists two simple cycles that both contain the edge $(x,y)$ and that only intersect in $x$ and $y$. It is clearly equivalent to prove that there exists two simple paths that begin at $y$ and end at $x$ and that only intersect at $x$ and $y$. To prove that, let $z\in V^x$ be a vertex, we know that there exists two simple paths $p^1$ and $p^2$ that go respectively from $y$ to $x$ and from $z$ to $x$ and that only intersect at $x$, now we look at the two simple cycles defined as follows: $\sigma^1$ is the cycle that starts at $x$ then goes through $(x,y)$ and then follows $p^1$ back to $x$ and $\sigma^2$ is the cycle that starts at $x$ then goes through $(x,y)$ and $(y,z)$ and then follows $p^2$ back to $x$. This gives us:
\[
\prod\limits_{v\in V}f_v\left(\sum\limits_{u\in V_v}\left(1_{(v,u)\in\sigma^1}+1_{(v,u)\in\sigma^2}\right)\overrightarrow{vu}\right)=\prod\limits_{v\in V}\check{f}_v\left(\sum\limits_{u\in V^v}\left(1_{(u,v)\in\sigma_1}+1_{(u,v)\in\sigma_2}\right)\overrightarrow{vu}\right).
\]
Since for every $v\in V\backslash\{x,y\}$, $v$ cannot be visited by both $\sigma^1$ and $\sigma^2$,  
we get that for every $v\in V\backslash\{x,y\}$:
\[
\begin{aligned}
&f_v\left(\sum\limits_{u\in V_v}\left(1_{(v,u)\in\sigma^1}+1_{(v,u)\in\sigma^2}\right)\overrightarrow{vu}\right)=1, \\
\text{and } &\check{f}_v\left(\sum\limits_{u\in V^v}\left(1_{(u,v)\in\sigma^1}+1_{(u,v)\in\sigma^2}\right)\overrightarrow{vu}\right)=1,
\end{aligned}
\]
we get:
\[
f_x(2\overrightarrow{xy})f_y\left(\sum\limits_{u\in V_y}\left(1_{(y,u)\in\sigma^1}+1_{(y,u)\in\sigma^2}\right)\overrightarrow{yu}\right)
=\check{f}_x\left(\sum\limits_{u\in V^x}\left(1_{(u,x)\in\sigma^1}+1_{(u,x)\in\sigma^2}\right)\overrightarrow{xu}\right)\check{f}_y(2\overrightarrow{yx}).
\]
This in turns means that
\[
f_y\left(\sum\limits_{u\in V_y}\left(1_{(y,u)\in\sigma^1}+1_{(y,u)\in\sigma^2}\right)\overrightarrow{yu}\right)=\check{f}_x\left(\sum\limits_{u\in V^x}\left(1_{(u,x)\in\sigma^1}+1_{(u,x)\in\sigma^2}\right)\overrightarrow{xu}\right).
\]
Therefore
\[
\frac{\Delta(1)^2}{\Delta_x(2)}=\frac{\Delta(1)^2}{\Delta_y(2)}.
\]
So we get $\Delta_x(2)=\Delta_y(2)$ and since the graph is connected, $\Delta(2)$ exists.\\
Now we can prove the existence of $\Delta(i)$ for $i\geq 3$ by induction.\\
First we assume that $\Delta(i)$ exists for $i\leq n$ for some $n\geq 2$. We want to prove that $\Delta(n+1)$ exists. First let $x\in V$ be a point of the graph and set two vertices $y_1,y_2\in V_x$. We know that there exists two simple paths $\gamma^1: y_1 \rightarrow x$ and $\gamma^2: y_2 \rightarrow x$ that only intersect in $x$ by lemma \ref{lem:2path}. We will call $z_1$ and $z_2$ the points such that the last edge through which $\gamma^1$ and $\gamma^2$ go are $(z_1,x)$ and $(z_2,x)$ respectively. We now consider two sequences of points $(y_3 \dots y_{n+1} )\in V_x$ and $(z_3 \dots z_{n+1} )\in V^x$ and a sequence of simple paths $(\gamma^3 \dots \gamma^{n+1} )$ such that for every $i$, $\gamma^i$ goes from $y_i$ to $z_i$ without passing through $x$ and then goes through the edge $(z_i,x)$. Now, for every $i\leq n+1$ we look at the simple cycle $\sigma^i$ which starts at $x$ then goes along the edge $(x,y_i)$, then follows the path $\gamma^i$. By construction of the cycles we have:
\[
\forall v\in V\backslash \{x\}, \#\left\{i,v\in \sigma^i\right\} \leq n.
\]
Therefore we get:
\[
\begin{aligned}
&f_x\left(\sum\limits_{i\leq n+1} \overrightarrow{xy_i}\right)\prod\limits_{v\in V\backslash \{x\}}\frac{\prod\limits_{e\in E_v} \Delta(\#\left\{i,e\in\sigma^i\right\})}{\Delta(\#\left\{i,v\in\sigma^i\right\})} \\
=&\check{f}_x\left(\sum\limits_{i\leq n+1} \overrightarrow{xz_i}\right)\prod\limits_{v\in V\backslash \{x\}}\frac{\prod\limits_{e\in E^v} \Delta(\#\left\{i,e\in\sigma^i\right\})}{\Delta(\#\left\{i,v\in\sigma^i\right\})}.
\end{aligned}
\]
So we get:
\[
\frac{f_x\left(\sum\limits_{i\leq n+1} \overrightarrow{xy_i}\right)}{\prod\limits_{e\in E_x} \Delta(\#\left\{i,(x,y_i)=e\right\})}=\frac{\check{f}_x\left(\sum\limits_{i\leq n+1} \overrightarrow{xz_i}\right)}{\prod\limits_{e\in E^x} \Delta(\#\left\{i,(z_i,x)=e\right\})}.
\]
Now we just need to use this equality to prove that $\frac{f_x\left(\sum\limits_{i\leq n+1} \overrightarrow{xy_i}\right)}{\prod\limits_{e\in E_x} \Delta(\#\left\{i,(x,y_i)=e\right\})}$ does not depend on the sequence $(y_i)$. Since the value of $(z_3,\dots,z_{n+1})$ does not depend on $(y_3,\dots,y_{n+1})$ we have that $\frac{f_x\left(\sum\limits_{i\leq n+1} \overrightarrow{xy_i}\right)}{\prod\limits_{e\in E_x} \Delta(\#\left\{i,(x,y_i)=e\right\})}$ does not depend on $(y_3,\dots,y_{n+1})$. To simplify notations we will write:
\[
g(y_1,\dots,y_{n+1})=\frac{f_x\left(\sum\limits_{i\leq n+1} \overrightarrow{xy_i}\right)}{\prod\limits_{e\in E_x} \Delta(\#\left\{i,(x,y_i)=e\right\})}
\]
Now let $(y^1_1,\dots,y^1_{n+1})$ and $(y^2_1,\dots,y^2_{n+1})$ be two sequences of vertices in $V_x$, we have:
\[
\begin{aligned}
g(y^1_1,\dots,y^1_{n+1}) &= g(y^1_1,y^1_2,y^2_1,\dots,y^2_{n-1}) \\
&=g(y^2_1,y^1_2,y^1_1,y^2_2,\dots,y^2_{n-1}) \\ 
&=g(y^2_1,y^1_2,y^2_2,\dots,y^2_n) \\ 
&=g(y^2_1,y^2_2,y^1_2,y^2_3,\dots,y^2_n) \\
&=g(y^2_1,y^2_2,y^2_3,\dots,y^2_{n+1}) \\
&=g(y^2_1,\dots,y^2_{n+1}).
\end{aligned}
\]
So we have that for every $x\in V$ there is a $\Delta_x(n+1)$ such that:
\[
f_x\left(\sum\limits_{i\leq n+1} \overrightarrow{xy_i}\right)=\frac{\prod\limits_{e\in E_x} \Delta(\#\left\{i,(x,y_i)=e\right\})}{\Delta_x(n+1)}
\]
and
\[
\check{f}_x\left(\sum\limits_{i\leq n+1} \overrightarrow{xz_i}\right)=\frac{\prod\limits_{e\in E^x} \Delta(\#\left\{i,(z_i,x)=e\right\})}{\Delta_x(n+1)}.
\]
Now we just need to prove that this $\Delta_x(n+1)$ does not depend on $x$. Let $x,y\in V$ be two vertices such that $(x,y)$ is an edge, we want to prove that $\Delta_x(n+1)=\Delta_y(n+1)$. This will yield the result we want since the graph is connected. Let $z$ be a point in $V_y\backslash \{ x \}$ which is not empty by lemma \ref{lem:2vertex}. Let $p^1$ and $p^2$ be two simple paths that go respectively from $y$ to $x$ and from $z$ to $x$ and that only intersect in $x$. We will not look at those paths but at the simple cycles $\sigma^1$ and $\sigma^2$ defined as follows: $\sigma^1$ starts at $x$, then goes along the edge $(x,y)$ and finally follows the path $p^1$ back to $x$, $\sigma^2$ starts at $x$, then goes along the edges $(x,y)$ and $(y,z)$ and finally follows the path $p^2$ back to $x$. Those two simple cycles only intersect in $x$ and $y$. This gives us:
\[
\prod\limits_{v\in V}f_v\left(\sum\limits_{u\in V_v}\left(n1_{(v,u)\in\sigma^1}+1_{(v,u)\in\sigma^2}\right)\overrightarrow{vu}\right)=\prod\limits_{v\in V}\check{f}_v\left(\sum\limits_{u\in V^v}\left(n1_{(u,v)\in\sigma_1}+1_{(u,v)\in\sigma_2}\right)\overrightarrow{vu}\right).
\]
Since for every $v\in V\backslash\{x,y\}$, $v$ cannot be visited by both $\sigma^1$ and $\sigma^2$,  
we get that for every $v\in V\backslash\{x,y\}$:
\[
\begin{aligned}
&f_v\left(\sum\limits_{u\in V_v}\left(n1_{(v,u)\in\sigma^1}+1_{(v,u)\in\sigma^2}\right)\overrightarrow{vu}\right)=1, \\
\text{and } &\check{f}_v\left(\sum\limits_{u\in V^v}\left(n1_{(u,v)\in\sigma^1}+1_{(u,v)\in\sigma^2}\right)\overrightarrow{vu}\right)=1,
\end{aligned}
\]
Now, we only have to look at $x$ and $y$. We get:
\[
f_x((n+1)\overrightarrow{xy})f_y\left(\sum\limits_{u\in V_y}\left(n1_{(y,u)\in\sigma^1}+1_{(y,u)\in\sigma^2}\right)\overrightarrow{yu}\right)
=\check{f}_x\left(\sum\limits_{u\in V^x}\left(n1_{(u,x)\in\sigma^1}+1_{(u,x)\in\sigma^2}\right)\overrightarrow{xu}\right)\check{f}_y((n+1)\overrightarrow{yx}).
\]
This in turns means that
\[
f_y\left(\sum\limits_{u\in V_y}\left(n1_{(y,u)\in\sigma^1}+1_{(y,u)\in\sigma^2}\right)\overrightarrow{yu}\right)=\check{f}_x\left(\sum\limits_{u\in V^x}\left(n1_{(u,x)\in\sigma^1}+1_{(u,x)\in\sigma^2}\right)\overrightarrow{xu}\right).
\]
Therefore
\[
\frac{\Delta(n)\Delta(1)}{\Delta_x(n+1)}=\frac{\Delta(n)\Delta(1)}{\Delta_y(n+1)}.
\]
So we get $\Delta_x(n+1)=\Delta_y(n+1)$ and since the graph is connected, $\Delta(n+1)$ exists. Therefore $\Delta$ exists by induction and we have proved the lemma.
\end{proof}
Now, the last thing we need to do is show that if the moments of transition probabilities are of the form
\[
\E\left(\prod\limits_{y\in V^x} \omega(x,y)^{n_{(x,y)}}\right)=\frac{\prod\limits_{y\in V^x} h_({x,y)}(n_{(x,y)})}{\tilde{h}_x\left(\sum\limits_{y\in V^x}n_{(x,y)}\right)},
\]
for some functions $(h_e)_{e\in E}$ and $(\tilde{h}_x)_{x\in V}$, then they follow a Dirichlet law or are deterministic. This can be done by using the following lemma and that for transition probabilities $\omega$, any vertex $x\in V$ and any integers $(n_{(x,y)})_{y\in V_x}$, we have:
\[
\E\left(\prod\limits_{y\in V_x}\omega(x,y)^{n_{(x,y)}}\right)=
\sum\limits_{z\in V_x}\E\left(\prod\limits_{y\in V_x}\omega(x,y)^{n_{(x,y)}+1_{y=z}}\right).
\]
This equality is a direct consequence of this other equality this other equality: $\sum\limits_{y\in V_x}\omega(x,y)=1$.
\begin{lem}\label{lem:momentstoDirichlet}
In this lemma, for any function $g:\N^d\mapsto \R$ we will write $g\left(\sum\limits_i n_i \overrightarrow{e_i}\right)$ instead of $g(n_1,\dots,n_d)$. Let $f:\N^d\mapsto \R$ be a function that satisfies:
\[
f(0)=1 \text{ and }\forall (n_1,\dots,n_d)\in \N^d,\ f\left(\sum\limits_{1\leq i \leq d} n_i\overrightarrow{e_i}\right) = \sum\limits_{1\leq j \leq d} f\left(\overrightarrow{e_j}+\sum\limits_{1\leq i \leq d} n_i\overrightarrow{e_i}\right),
\]
and such that there exists functions $(h_i)_{1\leq i \leq d}$ from $\N$ to $\R$ and $\tilde{h}:\N \mapsto \R^*$ that satisfy:
\[
\begin{aligned}
&f\left(\sum\limits_{1\leq i \leq d} n_i\overrightarrow{e_i}\right)=\frac{\prod\limits_{1\leq i \leq d}h_i(n_i)}{\tilde{h}\left(\sum\limits_{1\leq i \leq d} n_i\right)},\\
\text{and }&\forall i,\ h_i(0)=1 \text{ and } h_i(1)\not=0,\\
\text{and }&\tilde{h}(0)=1.
\end{aligned}
\]
Then, either $\tilde{h}(2)\not=\tilde{h}(1)^2$ and there exists constants $(\beta_i)_{1\leq i \leq d}\in \R$ and $\gamma\not=0$ such that:
\[
\begin{aligned}
&f\left(\sum\limits_{1\leq i \leq d} n_i\overrightarrow{e_i}\right)=\frac{\Gamma\left(\sum\limits_{1\leq i \leq d} \beta_i\right)}{\Gamma\left(\sum\limits_{1\leq i \leq d} n_i+\beta_i\right)}\prod\limits_{1\leq i \leq d}\frac{\Gamma(n_i+\beta_i)}{\Gamma(\beta_i)},\\
\text{and }&\forall i\in [|1,\dots,d|],\forall n\in \N, h_i(n)=\gamma^n H(\beta_i,n),\\
\text{and }&\forall n\in \N, \tilde{h}(n)=\gamma^n H\left(\sum\limits_{1\leq i \leq d} \beta_i,n\right),
\end{aligned}
\]
where $H(a,n)=\prod\limits_{0\leq i \leq n-1}(a+i)$.\\
Or $\tilde{h}(2)=\tilde{h}(1)^2$ and there exists constants $(c_i)_{1\leq i \leq d}$ and $\gamma \not=0$ such that:
\[
\begin{aligned}
&f\left(\sum\limits_{1\leq i \leq d} n_i\overrightarrow{e_i}\right)=\prod\limits_{1 \leq i \leq d}(c_i)^{n_i},\\
\text{and }&\forall i\in [|1,\dots,d|],\forall n\in \N, h_i(n)=(\gamma c_i)^n,\\
\text{and }&\forall n\in \N, \tilde{h}(n)=\gamma^n.
\end{aligned}
\]
\end{lem}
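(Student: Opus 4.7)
The plan is to substitute the assumed factorization into the sum-to-one recurrence for $f$. Dividing the identity
\[
\frac{\prod_i h_i(n_i)}{\tilde h(N)} = \sum_{j=1}^d \frac{h_j(n_j+1)\prod_{i\neq j}h_i(n_i)}{\tilde h(N+1)}
\]
(with $N:=\sum_i n_i$) by $\prod_i h_i(n_i)/\tilde h(N+1)$ yields
\[
\frac{\tilde h(N+1)}{\tilde h(N)} = \sum_{j=1}^d \frac{h_j(n_j+1)}{h_j(n_j)}.
\]
Writing $a_j(n) := h_j(n+1)/h_j(n)$, the key consequence is that $\sum_j a_j(n_j)$ depends on $(n_1,\dots,n_d)$ only through $N$. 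A short induction using $h_j(0)=1$, $h_j(1)\neq 0$, and the recurrence on $f$ shows every $h_j(n)$ is nonzero, so this division is legitimate.

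I would then extract the shape of each $a_j$. Fixing $n_k = 0$ for $k\neq i$ and varying $n_i$ gives $a_i(n) + \sum_{k \neq i} a_k(0) = g(n)$ for a single-variable function $g$; hence $a_i(n) - a_i(0)$ is independent of $i$. Call this common value $G(n)$, with $G(0)=0$. Re-substituting $a_j(n_j) = a_j(0) + G(n_j)$ into $\sum_j a_j(n_j) = g(\sum_j n_j)$ yields the Cauchy-type equation $G(n_1+\cdots+n_d) = \sum_j G(n_j)$, which on $\N$ forces $G(n) = \beta n$ for $\beta := G(1)$. Setting $c_j := a_j(0) = h_j(1)$, I obtain the explicit one-variable recurrences $h_j(n+1) = (\beta n + c_j)\,h_j(n)$ and $\tilde h(N+1) = (\beta N + \sum_i c_i)\,\tilde h(N)$.

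Unwinding these two recurrences splits into two cases. If $\beta = 0$ then $h_j(n) = c_j^n$ and $\tilde h(N) = (\sum_i c_i)^N$, so $f(\sum_i n_i \overrightarrow{e_i}) = \prod_j (c_j / \sum_i c_i)^{n_j}$, which after setting $\gamma := \sum_i c_i$ and renaming the $c_j$ appropriately is the deterministic form of the lemma. If $\beta \neq 0$, set $\beta_j := c_j/\beta$ and $\gamma := \beta$; the recurrences yield $h_j(n) = \gamma^n H(\beta_j, n)$ and $\tilde h(N) = \gamma^N H(\sum_j \beta_j, N)$, hence
\[
f\Bigl(\sum_i n_i \overrightarrow{e_i}\Bigr) = \frac{\prod_j H(\beta_j,n_j)}{H(\sum_j \beta_j,\, N)} = \frac{\Gamma(\sum_j \beta_j)}{\Gamma(\sum_j(n_j + \beta_j))}\prod_j \frac{\Gamma(n_j+\beta_j)}{\Gamma(\beta_j)},
\]
which is the Dirichlet form.

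It remains to verify that the dichotomy $\tilde h(2) = \tilde h(1)^2$ versus $\tilde h(2) \neq \tilde h(1)^2$ matches the split on $\beta$. A direct computation using $\tilde h(N+1) = (\beta N + \sum_i c_i)\tilde h(N)$ gives $\tilde h(2) - \tilde h(1)^2 = \beta\,\tilde h(1)$, and since $\tilde h(1) \neq 0$ this vanishes precisely when $\beta = 0$. The main obstacle is not conceptual but bookkeeping: one must verify the $h_j$ are nowhere zero so the division is legal, and carefully choose constants so the two formulas match the statement (the factorization is only defined up to the one-parameter rescaling noted before Lemma~\ref{lem:deltacorrection}, and $\gamma$ absorbs precisely this freedom).
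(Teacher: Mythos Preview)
Your approach is genuinely different from the paper's and conceptually cleaner: you substitute the factorization into the recurrence, divide, and obtain the additive identity $\sum_j a_j(n_j) = \tilde h(N+1)/\tilde h(N)$ for the ratios $a_j(n) = h_j(n+1)/h_j(n)$, from which linearity of the $a_j$ drops out via a Cauchy argument. The paper instead normalizes to $\tilde h(1)=1$, reads off $\beta$ and the $\beta_i$ from the values at levels $1$ and $2$, and then verifies by a step-by-step induction on $\sum n_i$ that $h_i(n)=\beta^{-n}H(\beta_i,n)$ and $\tilde h(n)=\beta^{-n}H(\beta,n)$, plugging specific tuples such as $(N-1)\overrightarrow{e_1}+\overrightarrow{e_2}$ into the recurrence at each stage.

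There is, however, a real gap. The assertion that ``a short induction shows every $h_j(n)$ is nonzero'' is false under the lemma's hypotheses: take $d=2$, $\gamma=1$, $\beta_1=-1$, $\beta_2=3$; then $\tilde h(n)=H(2,n)=(n+1)!\in\R^*$, $h_1(1)=-1\neq 0$, $h_2(1)=3\neq 0$, yet $h_1(2)=H(-1,2)=0$. Your division by $\prod_j h_j(n_j)$ is then illegal for any tuple with $n_1\ge 2$, and the Cauchy identity $G(\sum n_j)=\sum G(n_j)$ is not available on all of $\N^d$. The paper meets the same obstruction inside its induction and disposes of it with an extra argument: if at stage $N$ one had $H(\beta_i,N-1)=0$ for every $i$, then each $-\beta_i\in\{1,\dots,N-2\}$, and applying the recurrence at $\sum_i(-\beta_i)\overrightarrow{e_i}$ gives a nonzero left side but a right side that vanishes term by term (each summand contains the factor $H(\beta_j,-\beta_j+1)=0$), a contradiction. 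Your route can be repaired the same way---derive $h_j(n+1)=(c_j+\beta n)h_j(n)$ directly from $f(n\overrightarrow{e_j})=\sum_k f(n\overrightarrow{e_j}+\overrightarrow{e_k})$ without dividing, then push $g(n+1)=g(n)+\beta$ via a tuple $n\overrightarrow{e_i}+\overrightarrow{e_j}$ with $h_i(n)\neq 0$, and rule out ``all $h_i(n)=0$'' by the contradiction above---but as written the proof is incomplete. (In the theorem's application $f>0$ since it is a moment, so all $h_j(n)>0$ and your argument goes through unchanged; the issue is only with the lemma at its stated generality.)
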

\begin{proof}
First, we want to show that proving the result when $\tilde{h}(1)=1$ is enough. Indeed if we look at the functions:
\[
g_i(n)=\frac{h_i(n)}{\tilde{h}(1)^n} \text{ and } 
\tilde{g}(n)=\frac{\tilde{h}(n)}{\tilde{h}(1)^n},
\]
We still have:
\[
f\left(\sum\limits_{1\leq i \leq d} n_i\overrightarrow{e_i}\right)=\frac{\prod\limits_{1\leq i \leq d}g_i(n_i)}{\tilde{g}\left(\sum\limits_{1\leq i \leq d} n_i\right)}.
\]
We also have that $\tilde{h}(2)=\tilde{h}(1)^2$ if and only if $\tilde{g}(2)=\tilde{g}(1)^2$. We also have $\tilde{g}(1)=1$. Therefore looking at the case $\tilde{h}(1)=1$ is enough so we will only look at that case. \\
-If $\tilde{h}(2)\not=\tilde{h}(1)^2$. Let $\beta:=\frac{\tilde{h}(1)^2}{\tilde{h}(2)-\tilde{h}(1)^2}$ so that $\frac{\beta}{1+\beta}=\frac{\tilde{h}(1)^2}{\tilde{h}(2)}$. In particular, $\beta\not=0$. We will choose the following values for the $\beta_i$ and prove that they yield the desired result:
\[
\forall i, \beta_i:= f(\overrightarrow{e_i})\beta.
\]
Now we will prove by induction on $\sum\limits_{1\leq i \leq d}n_i $ that these values of $\beta_i$ are correct. To avoid problems of definition of the gamma function, we will use the following function $H:\R\times\N\mapsto \N$ defined by:
\[
\begin{aligned}
&H(a,0)=1,\\
\text{and }&H(a,n)=\prod\limits_{0\leq i \leq n-1} (a+i).
\end{aligned}
\]
When the gamma function is well-defined, we have the equality:
\[
H(a,n)=\frac{\Gamma(a+n)}{\Gamma(a)}.
\]
Now, we want to show by induction that:
\[
h_i(n)=\frac{1}{\beta^n}H(\beta_i,n)\text{ and }\tilde{h}(n)=\frac{1}{\beta^n}H(\beta,n).
\] 
The result is obviously true for $h_i(0)$, $\tilde{h}(0)$ and $\tilde{h}(1)$. We also have, by definition of $(\beta_i)_{1\leq i\leq d}$:
\[
h_i(1)=\frac{h_i(1)}{\tilde{h}(1)}=f(\overrightarrow{e_i})=\frac{\beta_i}{\beta}=\frac{H(\beta_i,1)}{\beta}.
\]
We also have, by definition of $\beta$: 
\[
\tilde{h}(2)=\frac{1+\beta}{\beta}=\frac{\beta(1+\beta)}{\beta^2},
\]
so we have the desired result. Now we have to look at $f(\overrightarrow{2e_i})$:
\[
\begin{aligned}
f(\overrightarrow{2e_i})&=f(\overrightarrow{e_i})-\sum\limits_{j \not = i} f(\overrightarrow{e_i}+\overrightarrow{e_j})\\
&=\frac{\beta_i}{\beta}-\sum\limits_{j \not = i}\frac{\beta_i \beta_j}{\beta (1+\beta)}\\
&=\frac{\beta_i}{\beta}-\frac{\beta_i}{(1+\beta)}\sum\limits_{j \not = i}\frac{\beta_j}{\beta}\\
&=\frac{\beta_i}{\beta}-\frac{\beta_i}{(1+\beta)}\frac{\beta-\beta_i}{\beta}\\
&=\frac{\beta_i (1+\beta_i)}{\beta(1+\beta)}.
\end{aligned}
\]
So: 
\[
h_i(2)=\tilde{h}(2)f(\overrightarrow{2e_i})=\frac{1+\beta}{\beta}\frac{\beta_i (1+\beta_i)}{\beta(1+\beta)}=\frac{\beta_i (1+\beta_i)}{\beta^2}.
\]
We have the desired result, now we can prove the result by induction on $n$. We have already proved it if $n \leq 2$. Now we assume it is proved for $n\leq N$ and we will prove it for $N+1$.
First we use the equality:
\[
f((N-1)\overrightarrow{e_1}+\overrightarrow{e_2})=\sum\limits_{1\leq i \leq d} f((N-1)\overrightarrow{e_1}+\overrightarrow{e_2}+\overrightarrow{e_i}).
\]
So:
\[
\begin{aligned}
\frac{h_1(N-1)h_2(1)}{\tilde{h}(N)}&=\frac{1}{\tilde{h}(N+1)}\left(h_1(N)h_2(1)+h_1(N-1)h_2(2)+\sum\limits_{3\leq i \leq d}h_1(N-1)h_2(1)h_i(1)\right)\\
&=\frac{1}{\beta^{N+1}\tilde{h}(N+1)}H(\beta_1,N-1)\beta_2\left(\beta_1+N-1+\beta_2+1+\sum\limits_{3\leq i \leq d} \beta_i\right)\\
&=\frac{1}{\beta^{N+1}\tilde{h}(N+1)}H(\beta_1,N-1)\beta_2(\beta+N).
\end{aligned}
\]
So we get:
\[
\frac{H(\beta_1,N-1)}{\beta^{N-1}\tilde{h}(N)}=\frac{H(\beta_1,N-1)(\beta+N)}{\beta^{N}\tilde{h}(N+1)}.
\]
Therefore:
\[
\tilde{h}(N+1)H(\beta_1,N-1)=\frac{1}{\beta}\tilde{h}(n)H(\beta_1,N-1)(\beta+N).
\]if we replace $\beta_1$ by any $\beta_i$. So either there exists $i$ such that $H(\beta_i,N-1)\not=0$ and we get:
\[
\tilde{h}(N+1)=\frac{H(\beta,N)}{\beta^{N+1}}
\]
or $\forall 1\leq i \leq d, H(\beta_i,N-1)=0$, which means that:
\[
\forall i, -\beta_i\in\N \text{ and } \beta_i\geq -(N-2).
\]
In the latter case, we have:
\[
f\left(\sum\limits_{1\leq i \leq d}-\beta_i\overrightarrow{e_i}\right)=\frac{\prod\limits_{1\leq i \leq d}H(\beta_i,-\beta_i)}{\beta^{-\beta}\tilde{h}(-\beta)}\not=0.
\]
However, we also have:
\[
f\left(\sum\limits_{1\leq i \leq d}-\beta_i\overrightarrow{e_i}\right)=\sum\limits_{1\leq j \leq d}f\left(\overrightarrow{e_j} +\sum\limits_{1\leq i \leq d}-\beta_i\overrightarrow{e_i}\right)=0
\]
so it is not possible.\\
Finally:
\[
\begin{aligned}
h_i(N+1)&=f((N+1)\overrightarrow{e_i})\tilde{h}(N+1)\\
&=\tilde{h}(N+1)\left(f(N\overrightarrow{e_i})-\sum\limits_{j\not=i}f(N\overrightarrow{e_i}+\overrightarrow{e_j})\right)\\
&=\frac{\tilde{h}(N+1)}{\tilde{h}(N)}h_i(N)-\sum\limits_{j\not=i}h_i(N)h_j(1)\\
&=\frac{\tilde{h}(N+1)}{\tilde{h}(N)}h_i(N)-\sum\limits_{j\not=i}h_i(N)\frac{\beta_j}{\beta}\\
&=h_i(N)\left(\frac{\beta+N}{\beta}-\sum\limits_{j\not=i}\frac{\beta_j}{\beta}\right)\\
&=h_i(N)\frac{\beta_i+N}{\beta}\\
&=\frac{H(\beta_i,N+1)}{\beta^{N+1}}.
\end{aligned}
\]
So we have the result we want.\\
-If $\tilde{h}(2)=\tilde{h}(1)^2=1$, we want to show that $\tilde{h}(n)=1$ and $h_i(n)=f(\overrightarrow{e_i})^n$. We note $C_i:=f(\overrightarrow{e_i})$ to simplify notations. We have the following equality:
\[
\sum\limits_{1\leq i \leq d} C_i=\sum\limits_{1\leq i \leq d}f(\overrightarrow{e_i})=f(0)=1
\]
We want to show the result by induction on $n$, it is obvious for $n=0$ and $n=1$. Now, we assume the result is proved for $n\leq N$, we want to prove it for $N+1$.
First we use the equality:
\[
f((N-1)\overrightarrow{e_1}+\overrightarrow{e_2})=\sum\limits_{1\leq i \leq d} f((N-1)\overrightarrow{e_1}+\overrightarrow{e_2}+\overrightarrow{e_i}).
\]
We get:
\[
\begin{aligned}
C_1^{N-1}C_2&=\frac{1}{\tilde{h}(N+1)}\left(C_1^{N}C_2+C_1^{N-1}C_2^2+\sum\limits_{3\leq i \leq d}C_1^{N-1}C_2C_i\right)\\
&=\frac{C_1^{N-1}C_2}{\tilde{h}(N+1)}\left(C_1+C_2+\sum\limits_{3\leq i \leq d} C_i\right)\\
&=\frac{C_1^{N-1}C_2}{\tilde{h}(N+1)},
\end{aligned}
\]
since $C_1^{N-1}C_2\not=0$, we get $\tilde{h}(N+1)=1$.\\
Finally:
\[
\begin{aligned}
h_i(N+1)&=f((N+1)\overrightarrow{e_i})\\
&=f(N\overrightarrow{e_i})-\sum\limits_{j\not=i}f(N\overrightarrow{e_i}+\overrightarrow{e_j})\\
&=C_i^N-\sum\limits_{j\not=i}C_i^NC_j\\
&=C_i^N\left(1-\sum\limits_{j\not=i}C_j\right)\\
&=C_i^{N+1}.
\end{aligned}
\]
So, we have the result we wanted.
\end{proof}
Now we have all we need to prove the theorem.
\begin{theo2}
Let $(V,E)$ be a finite directed graph, with no multiple edges, 2-connected and such that its reversed graph is also 2-connected. Then the only RWREs with a non-deterministic environment on this graph with independent transition probability such that for every edge $(x,y)$, we have $\E\left(p_{\omega}(x,y)\right)>0$ and the reversed walk also has independent transition probabilities is a RWRE where the transition probabilities are independent Dirichlet random variables with null divergence.
\end{theo2}
\begin{proof}
Let $(w(x,y))_{(x,y)\in E}$ be random transition probabilities and $(\check{w}(x,y))_{(x,y)\in\tilde{E}}$ the transition probabilities of the reversed environment such that the transition probabilities are independent at each site both for the environment and the reversed environment. We define the moment functions $f$ and $\check{f}$ by:
\[
\begin{aligned}
\forall x\in V, f_x\left(\sum\limits_{y\in V_x} n_{xy}\overrightarrow{xy} \right)&=\E\left(\prod\limits_{y\in E_x} (w(x,y))^{n_{xy}}\right) \text{ and }\\
\forall x\in V, \check{f}_x\left(\sum\limits_{y\in V^x} n_{yx}\overrightarrow{xy} \right)&=\E\left(\prod\limits_{y\in E^x} (\check{w}(x,y))^{n_{yx}}\right).
\end{aligned}
\]
These two moments functions are compatible because the transition-probabilities of the time-reversed random walk are defined by $\check{w}(y,x)\pi_y=w(x,y)\pi_x$ where $(\pi_x)_{x\in V}$ is the stationary law. Therefore, if $N: E \mapsto \N$ is of null divergence then:
\[
\begin{aligned}
\prod\limits_{v\in V}\check{f}_v\left(\sum\limits_{u\in V^v} N((u,v))\overrightarrow{vu}\right)&=
\prod\limits_{v\in V}\E\left(\prod\limits_{u\in V^v}(\check{w}(v,u))^{N((u,v))}\right)\\
&=\E\left(\prod\limits_{v\in V}\prod\limits_{u\in V^v}(\check{w}(v,u))^{N((u,v))} \right)\\
&=\E\left(\prod\limits_{v\in V}\prod\limits_{u\in V^v}\left(w(u,v)\frac{\pi_u}{\pi_v}\right)^{N((u,v))} \right)\\
&=\E\left(\prod\limits_{v\in V}\prod\limits_{u\in V^v}(w(u,v))^{N((u,v))}\prod\limits_{v\in V} (\pi_v)^{\sum\limits_{u\in E_v} N((v,u))-\sum\limits_{u\in V^v} N((u,v))} \right)\\
&=\E\left(\prod\limits_{v\in V}\prod\limits_{u\in V^v}(w(u,v))^{N((u,v))}\right)\\
&=\prod\limits_{v\in V}\E\left(\prod\limits_{u\in V_v}(w(v,u))^{N((v,u))}\right)\\
&=\prod\limits_{v\in V}f_v\left(\sum\limits_{u\in V_v} N((v,u))\overrightarrow{vu}\right).
\end{aligned}
\]
Now we can apply the result of lemma \ref{lem:hfunctions} which gives the existence of functions $\tilde{h}_x:\N\mapsto (0,\infty)$ for every $x\in V$ and functions $h_e: \N \mapsto (0,\infty)$ for every $e\in E$ such that:
\[
\begin{aligned}
&\forall x\in V, \forall y\in V_x, \forall n\in \N, f_x(n\overrightarrow{xy})=\frac{h_{(x,y)}(n)}{\tilde{h}_x(n)} \text{ and} \\
&\forall x\in V, \forall y\in V^x, \forall n\in \N, \check{f}_x(n\overrightarrow{xy})=\frac{h_{(y,x)}(n)}{\tilde{h}_x(n)}.
\end{aligned}
\]
Now we can consider the moment functions $g$ and $\check{g}$ defined by:
\[
\begin{aligned}
&\forall x\in V, g_x\left(\sum\limits_{y\in V_x} N((x,y))\overrightarrow{xy}\right)=f_x\left(\sum\limits_{y\in V_x} N((x,y))\overrightarrow{xy}\right)\frac{\tilde{h}_x\left(\sum\limits_{y\in V_x} N((x,y))\right)}{\prod\limits_{y\in V_x} h_{(x,y)}(N((x,y)))} \text{ and} \\
&\forall x\in V, \check{g}_x\left(\sum\limits_{y\in V^x} N((y,x))\overrightarrow{xy}\right)=\check{f}_x\left(\sum\limits_{y\in V^x} N((y,x))\overrightarrow{xy}\right)\frac{\tilde{h}_x\left(\sum\limits_{y\in V^x} N((y,x))\right)}{\prod\limits_{y\in V^x} h_{(y,x)}(N((y,x)))}.
\end{aligned}
\]
The moment functions $g$ and $\check{g}$ are compatible that satisfy the hypotheses of lemma \ref{lem:deltacorrection} so there exists a function $\Delta:\N\mapsto (0,\infty)$ such that:
\[
\begin{aligned}
&\forall x\in V, g_x\left(\sum\limits_{y\in V_x} N((x,y))\overrightarrow{xy}\right)=\frac{\prod\limits_{y\in V_x} \Delta(N((x,y)))}{\Delta\left(\sum\limits_{y\in V_x}N((x,y))\right)}\text{ and} \\
&\forall x\in V, \check{g}_x\left(\sum\limits_{y\in V^x} N((y,x))\overrightarrow{xy}\right)=\frac{\prod\limits_{y\in V^x} \Delta(N((y,x)))}{\Delta\left(\sum\limits_{y\in V^x}N((y,x))\right)}.
\end{aligned}
\]
We define the following functions: 
\[
\begin{aligned}
&\forall e\in E, h^{\prime}_e(n):=\Delta(n)h_e(n)\\
\text{and }&\forall x\in V, \tilde{h}^{\prime}_x(n):=\Delta(n)\tilde{h}_x(n).
\end{aligned}
\]
We have:
\[
\begin{aligned}
&\forall x\in V, f_x\left(\sum\limits_{y\in V_x} N((x,y))\overrightarrow{xy}\right)=\frac{\prod\limits_{y\in V_x} h^{\prime}_{(x,y)}(N((x,y)))}{\tilde{h}^{\prime}_x\left(\sum\limits_{y\in V_x}N((x,y)) \text{ and}\right)} \\
&\forall x\in V, \check{f}_x\left(\sum\limits_{y\in V^x} N((y,x))\overrightarrow{xy}\right)=\frac{\prod\limits_{y\in V^x} h^{\prime}_{(y,x)}(N((y,x)))}{\tilde{h}^{\prime}_x\left(\sum\limits_{y\in V_x}N((y,x))\right)}.
\end{aligned}
\]
Now, according to lemma \ref{lem:momentstoDirichlet}, we get that for every $x\in V$, $f_x$ is either the moments of a Dirichlet distribution or the moments of a deterministic distribution, the same is true for $\check{f}_x$. Either all of the $f_x$ and $\check{f}_x$ are moments of Dirichlet distribution, or at least one of them is deterministic in which case we can assume that it is $f_x$ for some $x \in V$ that will be fixed for the rest of the proof. We want to prove that in the later case, all the probability transitions are deterministic. For any $x\in V$, if either $f_x$ or $\check{f}_x$ is deterministic then according to lemma \ref{lem:momentstoDirichlet} there is a $\gamma_x\not=0$ such that $\tilde{h}^{\prime}_x(n)=\gamma_x^n$ and therefore, still according to lemma \ref{lem:momentstoDirichlet} if $\tilde{h}^{\prime}_x(n)=\gamma_x^n$ then we are in the case where $\tilde{h}^{\prime}_x(2)=\tilde{h}^{\prime}_x(1)^2$ and both $f_x$ and $\check{f}_x$ are the moments of deterministic transitions probabilities. Now let $(x,y)\in E$ be an edge, if $f_x$ is the moment of deterministic transitions probabilities then, once again by lemma \ref{lem:momentstoDirichlet}, there is a $\gamma_{(x,y)}\not=0$ such that $h^{\prime}_{(x,y)}(n)=\gamma_{(x,y)}^n$. Now we get:
\[
\check{f}_y\left(n\overrightarrow{yx}\right)=\frac{h^{\prime}_{(x,y)}(n)}{\tilde{h}^{\prime}_y\left(n\right)}.
\]
According to lemma \ref{lem:momentstoDirichlet} the only way to have $h^{\prime}_{(x,y)}(n)=\gamma_{(x,y)}^n$ is that $\check{f}_y$ and $f_y$ are the moments of deterministic transition probabilities. Now since the graph is connected, we get that for all vertices $z\in V$, the function $f_s$ is the moments of deterministic transition probabilities. \\
If for every $x\in V$, $f_x$ is the moments of a Dirichlet distribution then we want to prove that we have null divergence. According to lemma \ref{lem:momentstoDirichlet} there exists $(\beta_e)_{e\in E}$ such that for every $x\in V$, $f_x$ is the moments of a Dirichlet distribution of parameters $(\beta_e)_{e\in E_x}$ and $\check{f}_x$ is the moments of a Dirichlet distribution of parameters $(\beta_e)_{e\in E^x}$. There exists $(\gamma_x)_{x\in V},(\beta_x)_{x\in V}$ such that:
\[
\forall x \in V,\forall n\in\N, \tilde{h}^{\prime}_x(n)=\gamma_x^n\frac{\Gamma(\beta_x + n)}{\Gamma(\beta_x)}.
\]
Since for every $x\in V$, $\sum\limits_{y\in V_x}\omega(x,y)=1$ almost surely, we have:
\[
\forall x\in V \sum\limits_{y\in V_x}\frac{h^{\prime}_{(x,y)}(1)}{\tilde{h}^{\prime}_x(1)}=1.
\]
This means, according to lemma \ref{lem:momentstoDirichlet} that:
\[
\forall x\in V, \sum\limits_{y\in V_x}\beta_{(x,y)}=\beta_x.
\]
The same way we get, by looking at the reversed walk:
\[
\forall x\in V, \sum\limits_{y\in V^x}\beta_{(y,x)}=\beta_x.
\]
This means that the parameters of the Dirichlet distributions have null divergence.
\end{proof}

\bibliographystyle{abbrv}
\bibliography{biblio}
\end{document}